\renewcommand\eqref[1]{(\ref{#1})} 
\numberwithin{equation}{section}
\newtheorem{theorem}{Theorem}[section]
\newtheorem{proposition}[theorem]{Proposition}
\newtheorem{corollary}[theorem]{Corollary}
\theoremstyle{definition}
\newtheorem{remark}[theorem]{Remark}
\numberwithin{equation}{section}
\newcommand {\R}{{\mathbb R}}
\begin{document}



\title{Caffarelli--Kohn--Nirenberg inequalities on Lie groups of polynomial growth}


\translator{}

\dedicatory{}

\author[Chokri~Yacoub]{Chokri~Yacoub}

\begin{abstract}
In the setting of a Lie group of polynomial volume growth, we derive  inequalities of Caffarelli--Kohn--Nirenberg type,
where the weights involved are powers of the Carnot--Caratheodory distance associated with a 
fixed system of vector fields which satisfy the H\"ormander condition.

The use of weak $L^p$ spaces is crucial in our proofs and we formulate these inequalities 
within the framework of $L^{p,q}$ Lorentz spaces (a scale of (quasi)-Banach spaces  which extend the more
classical $L^p$ Lebesgue spaces) thereby obtaining a refinement of, for instance, Sobolev and Hardy--Sobolev inequalities.

\end{abstract}

\date{}

\subjclass[2010]{Primary 22E30, 43A80; Secondary 46E30, 46E35}

\keywords{Lie group, polynomial growth, Caffarelli--Kohn--Nirenberg inequalities, Lorentz spaces, interpolation}

\thanks{}

\address{Universit\'e de Monastir\\
Facult\'e des Sciences 
de Monastir\\
D\'epartement de Math\'ematiques\\
5019 Monastir\\
Tunisie}



\email{chokri.yacoub@fsm.rnu.tn}

\urladdr{}

\maketitle

\section{Introduction}\label{sec:intro}

The classical Hardy inequality in the Euclidean space $\R^n$, $n\geq 3$, asserts that for all compactly supported smooth functions $f$,

\begin{equation}\label{eq:intro}
\int_{\R^n}\frac{\left|f(x)\right|^2}{\left|x\right|^2} dx \leq \left(\frac{2}{n-2}\right)^2\int_{\R^n}\left|\nabla f(x)\right|^2dx.
\end{equation}

If one is not interested in the best constant, one can derive (via the spherically symmetric rearrangement ) another mathematical formulation 
of uncertainty principle, namely the Sobolev inequality

\begin{equation}\label{eq:intro1}
\left(\int_{\R^n}\left|f(x)\right|^{\frac{2n}{n-2}}dx\right)^{\frac{n-2}{2n}} \leq c(n) \left(\int_{\R^n}\left|\nabla f(x)\right|^2dx \right)^{1/2}.
\end{equation}

Now, let $0\leq s\leq 2$ and define the critical exponent relative to $s$ by $2^\ast(s)=2\frac{n-s}{n-2}$. If we interpolate between \eqref{eq:intro} and \eqref{eq:intro1} then we obtain the Hardy-Sobolev inequality:

\begin{equation}\label{eq:intro3}
\left(\int_{\R^n}\frac{\left|f(x)\right|^{2^\ast(s)}}{\left|x\right|^s} dx \right)^{\frac{1}{2^\ast(s)}} \leq c(n,s) \left(\int_{\R^n}\left|\nabla f(x)\right|^2dx \right)^{1/2}.
\end{equation}

Indeed, observe that $2^\ast(s) = 2 \theta +(1-\theta)\frac{2n}{n-2}$, with $\theta =\frac{s}{2}$. Then by H\"older's 
inequality we obtain

\begin{align*}
\int_{\R^n}\frac{\left|f(x)\right|^{2^\ast(s)}}{\left|x\right|^s} dx&=\int_{\R^n}\frac{\left|f(x)\right|^{2 \theta +(1-\theta)\frac{2n}{n-2}}}{\left|x\right|^s} dx\\
&\leq \left(\int_{\R^n}\frac{\left|f(x)\right|^2}{\left|x\right|^2} dx\right)^\theta \left(\int_{\R^n}\left|f(x)\right|^{\frac{2n}{n-2}}\right)^{1-\theta}\notag\\
&\leq c'(n) \left(\int_{\R^n}\left|\nabla f(x)\right|^2dx \right)^{\frac{2^\ast(s)}{2}}
\end{align*}

by \eqref{eq:intro} and \eqref{eq:intro1}.
\vskip .2cm

The Caffarelli--Kohn--Nirenberg inequalities \cite{ckn} are a family of functional inequalities, with spherical weight, that extend and unify many classical ones like \eqref{eq:intro}, \eqref{eq:intro1}, \eqref{eq:intro3} etc. More precisely:
let $p$ $\geq$ $1$, $q$ $\geq$ $1$, $r$ $>0$; $\alpha$, $\beta$, $\gamma$; and $0$ $<$ $a$ $\leq$ $1$ fixed real numbers satisfying

\begin{equation} \label{eq:basis}
-\gamma< \frac{n}{r},\,\,-\alpha< \frac{n}{p},\,\,-\beta< \frac{n}{q}
\end{equation}

\begin{equation} \label{eq:basis}
\gamma+\frac{n}{r}=a(\alpha-1+\frac{n}{p})+(1-a)(\beta+\frac{n}{q}).
\end{equation}

Denote with $\Delta$ the quantity

$$\Delta=-\gamma+a\alpha+(1-a)\beta.$$

Then the Caffarelli--Kohn--Nirenberg inequalities, as formulated by D'Ancona and Luca' in \cite{luca}, states that there exists a positive constant $c$ such that for smooth compactly supported functions $f$ on the Euclidean n-space,

\begin{equation}\label{ckn}
\left(\int_{\R^n} \left|x\right|^{\gamma r}\left|f(x)\right|^r dx\right)^{1/r} \leq c \left(\int_{\R^n} \left|x\right|^{\alpha p}\left|\nabla f(x)\right|^p dx\right)^{a/p} \left(\int_{\R^n} \left|x\right|^{\beta q}\left|f(x)\right|^q dx\right)^{(1-a)/q}
\end{equation}

if and only if 

i) $\Delta \geq 0$

ii) $\Delta\leq a$ when $\frac{1}{r}+\frac{\gamma}{n}=\frac{1}{p}+\frac{\alpha-1}{n}$.

\vskip .2cm

A natural step further is to extend the above Caffarelli--Kohn--Nirenberg inequalities ( in short CKN) to the case of Lie groups or Riemannian manifolds.
This note is a contribution to the study of CKN inequalities on a Lie group $G$. In this direction, we list only some references that share a similar 
theme as ours. In \cite{gri}, Grillo obtained some global weighted Sobolev inequalities (see his Theorem 2.2), where the weights are powers of an  intrinsic metric associated to a system of locally Lipschitz vector fields verifying certain conditions fulfilled, in particular, by H\"ormander's vector fields. As he noticed, his approach relies crucially upon the Sawyer-Wheeden condition for weighted inequality (see \cite{sa} or \cite{p}).
A class of CKN inequalities was established in \cite{chi} in the setting of an important subclass of two-step stratified nilpotent Lie groups, namely 
groups of Heisenberg-type. In \cite{loh}, Lohou\'e obtained an $L^p$ Hardy-Sobolev type inequalities in different frameworks such as the homogeneous, polynomial growth or nonamenable Lie groups. We also mention \cite{cow} (especially Section 6) and the recent \cite{ru1} and \cite{ru2}.

\section{Statement of the results and method of the proofs}\label{sec:result}

To present our first result, we need to introduce some terminology and for any exact definition, please refer to Section ~\ref{sec:basic}.
As annouced in the abstract, the aim of this paper is to give a version of CKN inequalities in various Lorentz (quasi)-norms on a  Lie group of polynomial growth.
The Euclidean distance which appears in \eqref{ckn} will be replaced by the Carnot-Caratheodory distance $\rho$ induced by a fixed 
 H\"ormander system $\mathbb{X}=\{X_{1}, \dots, X_{k}\}$ of left invariant vector fields on $G$. For short we will write $\rho (x)=\rho(e,x)$
 where $e$ is the  identity element of $G$. The local dimension of $(G,\mathbb{X})$ will be denoted by $d$. Contrary to the later, the dimension at infinity, noticed here by $D$, is independant of a particular choice of a system $\mathbb{X}$. We will assume that $d<D$. This is the case if for instance $G$ is a not stratified simply connected nilpotent Lie group. But we would like to note that all our results are true (and even easier to handle) in the context of a homogeneous nilpotent Lie group, where the growth of the quasi-balls (associated to any homogeneous quasi-norm on
$G$) is governed by a single integer that is the homogeneous dimension of the group.

\begin{theorem}\label{th1}
Let $n \in \left[d,D\right]$, $1<p<d$, $r,q\geq 1$; $ \beta, \gamma$; and $0<a\leq 1$ fixed real numbers satisfying
\begin{equation} \label{eq:balance}
\frac{1}{r}+\frac{\gamma}{n}=a(\frac{1}{p}-\frac{1}{n})+(1-a)(\frac{1}{q}+\frac{\beta}{n}).
\end{equation}
Let also $1\leq q_1,q_2,q_3 \leq \infty$ such that $\frac{1}{q_1}=\frac{a}{q_2}+\frac{1-a}{q_3}$. Denote the quantity
\begin{equation} \label{delta}
\delta = -\gamma +(1-a)\beta.
\end{equation}
Assume that
\begin{equation} \label{cond}
0\leq\delta\leq a,
\end{equation}
then
\begin{equation} \label{eq1}
\left\|\rho^\gamma f\right\|_{r,q_1} \leq C \left\|\left|X f\right|\right\|_{p,q_2}^a\left\|\rho^\beta f\right\|_{q,q_3}^{1-a}
\end{equation}
for all $f \in C^\infty _c (G)$.
\end{theorem}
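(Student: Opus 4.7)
The strategy is to mimic the Euclidean interpolation argument displayed in the Introduction: I factorise $\rho^{\gamma} f$ pointwise into a product of three pieces, separate them via Hölder's inequality in Lorentz spaces, and finally absorb two of the three factors using Lorentz versions of the Hardy and Sobolev inequalities on $(G,\rho)$ that should be in hand from the preparatory part of the paper.

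Set $p^{*}=np/(n-p)$, the sub-Riemannian Sobolev exponent attached to the dimension $n$. Condition \eqref{cond} together with $0<a\le 1$ guarantees that $\delta$, $a-\delta$ and $1-a$ are non-negative and sum to $1$, while \eqref{delta} reads $\gamma=-\delta+(1-a)\beta$. This permits the pointwise factorisation
\[
\rho(x)^{\gamma}\,|f(x)|=\bigl(\rho(x)^{-1}|f(x)|\bigr)^{\delta}\;|f(x)|^{\,a-\delta}\;\bigl(\rho(x)^{\beta}|f(x)|\bigr)^{1-a}.
\]
A direct rearrangement of \eqref{eq:balance} shows that the exponents match up in the sense that
\[
\frac{1}{r}=\frac{\delta}{p}+\frac{a-\delta}{p^{*}}+\frac{1-a}{q},\qquad \frac{1}{q_{1}}=\frac{\delta}{q_{2}}+\frac{a-\delta}{q_{2}}+\frac{1-a}{q_{3}},
\]
so iterating O'Neil's Hölder inequality for Lorentz quasi-norms (combined with the power rule $\|g^{\lambda}\|_{s,t}=\|g\|_{s\lambda,\,t\lambda}^{\lambda}$ for $\lambda>0$) yields
\[
\|\rho^{\gamma} f\|_{r,q_{1}}\;\le\;C\,\|\rho^{-1} f\|_{p,q_{2}}^{\delta}\;\|f\|_{p^{*},q_{2}}^{a-\delta}\;\|\rho^{\beta} f\|_{q,q_{3}}^{1-a}.
\]

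I would then close the estimate by feeding in two ingredients, both valid for $1<p<d$ and $f\in C^{\infty}_{c}(G)$: the Lorentz Hardy inequality $\|\rho^{-1} f\|_{p,q_{2}}\le C\,\||Xf|\|_{p,q_{2}}$ and the Lorentz Sobolev embedding $\|f\|_{p^{*},q_{2}}\le C\,\||Xf|\|_{p,q_{2}}$. Substituting these into the previous display and using $\delta+(a-\delta)=a$ produces \eqref{eq1}. The boundary cases $\delta=0$ and $\delta=a$ simply collapse the three-factor Hölder to a two-factor one (pure Sobolev, respectively pure Hardy) and require no separate treatment.

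The main obstacle is thus not the algebraic factorisation, which is essentially forced by \eqref{eq:balance} and \eqref{delta}, but the availability of the Lorentz refinements of Hardy's and Sobolev's inequalities in this geometric setting. Because the volume of $\rho$-balls behaves as $r^{d}$ at small scales and $r^{D}$ at large scales, there is no single ambient dimension: the parameter $n\in[d,D]$ in \eqref{eq:balance} precisely encodes the range in which such embeddings hold, and their proof hinges on sharp weak-type ($L^{p,\infty}$) bounds for the sub-Riemannian Riesz potential $(-\sum_{j} X_{j}^{2})^{-1/2}$, followed by real interpolation in the second Lorentz index to access the general target $L^{r,q_{1}}$.
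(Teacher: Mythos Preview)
Your argument is correct and is essentially the paper's own proof: a H\"older factorisation in Lorentz norms followed by the Lorentz--Hardy and Lorentz--Sobolev inequalities, both of which the paper establishes beforehand. The only organisational difference is that the paper performs a \emph{two}-factor split $\rho^{\gamma}f=(\rho^{\sigma}f)^{a}(\rho^{\beta}f)^{1-a}$ with $\sigma=-\delta/a\in[-1,0]$ and then applies a single Hardy--Sobolev inequality $\|\rho^{\sigma}f\|_{s,q_{2}}\lesssim\||Xf|\|_{p,q_{2}}$ (proved as a separate corollary) to the first factor, whereas you go directly to a three-factor split and invoke Hardy and Sobolev separately; the two routes are equivalent, since the paper's Hardy--Sobolev lemma is itself just the interpolant between the two endpoint inequalities you use.
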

Here $\left|Xf\right|^2 = \sum_{i=1}^k (X_{i}f)^2$. 
Let us make some remarks concerning the approach used in the proof of \eqref{eq1}. As is well known, in the Euclidean space $\R^n$, the most obvious way to obtain Sobolev type inequalities is to use some representation formula expressing a smooth compactly supported function $f$ as a fractional integral of its gradient. We will follow the same path and, unlike Lohou\'e in \cite{loh}, this approach avoid us using the $L^p$ boundedness of the Riesz transform on a Lie group of polynomial growth which is a profound result of Alexopoulos \cite{alex}.
Our starting point is a pointwise representation (see \cite{sc}):

\begin{equation}\label{eq:pr}
f(x) = \sum_{i = 1}^{k} N_{i} \ast X_{i}f(x)
\end{equation}
for any smooth, compactly supported function $f$ in $G$. Here the convolution operation on $G$ is defined by
$$f\ast g (x) = \int_G f(y)\,g(y^{-1} x)\,dy = \int_G f(x y^{-1})\,g(y)\,dy$$
and
\begin{equation}\label{eq:noyau}
N_{i}(x) = c \int_{0}^{+\infty} X_{i}h_{t}(x^{-1})\,dt
\end {equation}
where $h_{t}$ denotes the heat kernel associated to the sublaplacian $\sum_{i = 1}^{k} X_{i}^{2}$.
Moreover by [\cite{sc}, Lemma 2], the kernel $N_{i}$ verifies the following estimate

\begin{equation}\label{eq:estimate}
\left|N_{i}(x)\right| \lesssim \frac{\rho (x)}{V(\rho (x))}
\end{equation}
where $V(r)$ denotes the Haar measure of any ball of rayon $r$ with respect to the Carnot-Caratheodory distance.
Let us denote by $N(x)$ the right-hand side of \eqref{eq:estimate} and define the integral operator

\begin{align}
T(f)(x)&=N\ast f(x) \notag
\\&=\int_G N(y)f(y^{-1}x)\,dy
\end{align}
so that by \eqref{eq:pr} and \eqref{eq:estimate}
\begin{equation}\label{eq:iné}
\left|f(x)\right| \lesssim \sum_{i = 1}^{k}\,T\left( \left|X_{i}f\right| \right)\,(x).
\end {equation}
Using\eqref {eq:iné}, we obtain inequalities in that order: Sobolev, Hardy, then Hardy--Sobolev and finally \eqref{eq1}.

Certainly, the reader noticed that the first term in the right-hand side of \eqref{eq1} is unweighted. To obtain the full weighted CKN inequalities (i.e. all the three terms are weighted), we are led to study weighted $L^{p,\,q}$ estimates for the convolution operator $T$ of type
\[
\left\|vT(uf)\right\|_{s,q_{3}} \lesssim \left\|f\right\|_{p,q_{2}},
\]
where $u$ and $v$ are two nonnonnegative locally integrable functions. This is the content of the Proposition \ref{pro} , where we give sufficient conditions
 on $u$ and $v$ to obtain such inequality.
Finally, applying Proposition \ref{pro} to a particular choice of weights, we obtain the following

\begin{theorem}\label{th2}
Let $p$, $q$, $r$ $>$ $1$; $\alpha$ $>$ $0$, $ \beta$, $\gamma$; and $0$ $<$ $a$ $\leq$ $1$ fixed real numbers satisfying
\begin{equation} \label{li}
\gamma < \frac{d}{r},\,\,\,\, \beta < \frac{d}{q},\,\,\,\,1 - \frac{d}{p} < \alpha < \frac{d}{p'}
\end{equation}
\begin{equation} \label{eq:balance1}
\gamma-\frac{d}{r} = a\left( 1 -\alpha - \frac{d}{p}\right)+(1-a)\left( \beta - \frac{d}{q}\right).
\end{equation}
Let also $1\leq q_1,q_2,q_3 \leq \infty$ such that $\frac{1}{q_1}=\frac{a}{q_2}+\frac{1-a}{q_3}$. Denote the quantity
\begin{equation} \label{Delta}
\Delta = \gamma + a\alpha-(1-a)\beta.
\end{equation}
Assume that
\begin{equation} \label{cond}
0\leq\Delta\leq a,
\end{equation}
then
\begin{equation} \label{eq2}
\left\|\rho^{-\gamma} f\right\|_{r,q_1} \leq C \left\|\rho^\alpha \left|X f\right|\right\|_{p,q_2}^a\left\|\rho^{-\beta} f\right\|_{q,q_3}^{1-a}
\end{equation}
for all $f \in C^\infty _c (G)$.
\end{theorem}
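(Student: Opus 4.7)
The plan is to first establish the ``$a=1$ endpoint'' case of \eqref{eq2} by combining the pointwise representation $|f(x)|\lesssim\sum_{i=1}^{k}T(|X_{i}f|)(x)$ with the weighted $L^{p,q}$ bound for $T$ supplied by Proposition \ref{pro}, and then to recover the full range $0<a\leq 1$ by H\"older interpolation in the Lorentz scale against the unused factor $\|\rho^{-\beta}f\|_{q,q_3}$. For the endpoint step I would set $u(x)=\rho(x)^{-\alpha}$ and $v(x)=\rho(x)^{-\gamma_0}$ for an auxiliary exponent $\gamma_0$, and apply Proposition \ref{pro} tested at $g=\rho^{\alpha}|X_{i}f|$:
\[
\|\rho^{-\gamma_0}\,T(|X_{i}f|)\|_{r_0,q_2}
=\|v\,T(u\,g)\|_{r_0,q_2}
\lesssim\|g\|_{p,q_2}
=\|\rho^{\alpha}|X_{i}f|\|_{p,q_2},
\]
for exponents $(\gamma_0,r_0)$ satisfying $\gamma_0-\tfrac{d}{r_0}=1-\alpha-\tfrac{d}{p}$, i.e.\ the balance \eqref{eq:balance1} specialized to $a=1$. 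Summing over $i$ and using the pointwise inequality yields the endpoint weighted Hardy--Sobolev inequality $\|\rho^{-\gamma_0}f\|_{r_0,q_2}\lesssim\|\rho^{\alpha}|Xf|\|_{p,q_2}$.

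For general $a\in(0,1]$ I would set $\gamma_0:=(\gamma-(1-a)\beta)/a$ and choose $r_0$ via $\tfrac{1}{r}=\tfrac{a}{r_0}+\tfrac{1-a}{q}$. A short computation using \eqref{eq:balance1} shows that these choices satisfy the endpoint balance $\gamma_0-d/r_0=1-\alpha-d/p$, so the endpoint estimate of the previous paragraph applies. Writing
\[
\rho^{-\gamma}f=(\rho^{-\gamma_0}f)^{a}\,(\rho^{-\beta}f)^{1-a}
\]
and applying H\"older's inequality in the Lorentz scale with the pairing $(r_0/a,q_2/a)$ against $(q/(1-a),q_3/(1-a))$---the second-index identity $\tfrac{1}{q_1}=\tfrac{a}{q_2}+\tfrac{1-a}{q_3}$ being exactly the hypothesis---gives
\[
\|\rho^{-\gamma}f\|_{r,q_1}\leq C\,\|\rho^{-\gamma_0}f\|_{r_0,q_2}^{a}\,\|\rho^{-\beta}f\|_{q,q_3}^{1-a},
\]
and substituting the endpoint bound produces \eqref{eq2}.

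I expect the substance of the argument to be concentrated in the endpoint step, where one must verify that the pair $(\rho^{-\alpha},\rho^{-\gamma_0})$ meets the hypotheses of Proposition \ref{pro}. The constraints \eqref{li} are designed exactly for this purpose: the two-sided bound $1-d/p<\alpha<d/p'$ controls the input weight $u$ (both its decay at infinity and its local integrability near the origin), while $\gamma<d/r$ and $\beta<d/q$ provide the analogous control on the output weight $v$ after interpolation. The key dimensional condition $0\leq\Delta\leq a$, via the identity $\Delta=a(\gamma_0+\alpha)$, unpacks into the simple range $0\leq\gamma_0+\alpha\leq 1$, which is precisely the quantitative constraint under which the weighted fractional-integral operator $T$ is bounded at this endpoint. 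Once Proposition \ref{pro} is in place, the remaining steps are routine Lorentz-space bookkeeping.
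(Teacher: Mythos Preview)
Your proposal is correct and follows essentially the same route as the paper: the paper introduces auxiliary parameters $\sigma$ and $s$ (your $\gamma_0$ and $r_0$) via $\gamma=a\sigma+(1-a)\beta$ and $\tfrac{1}{r}=\tfrac{a}{s}+\tfrac{1-a}{q}$, applies the Lorentz H\"older inequality to split $\|\rho^{-\gamma}f\|_{r,q_1}\leq\|\rho^{-\sigma}f\|_{s,q_2}^{a}\|\rho^{-\beta}f\|_{q,q_3}^{1-a}$, and then bounds the first factor using the pointwise estimate $|f|\lesssim T(|Xf|)$ together with Corollary~\ref{cor} (Proposition~\ref{pro} specialized to the power weights $u=\rho^{-\alpha}$, $v=\rho^{-\sigma}$). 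Your observation that $\Delta=a(\gamma_0+\alpha)$, and hence $0\leq\Delta\leq a$ is exactly $0\leq\gamma_0+\alpha\leq 1$, is the clean way to read off the endpoint constraint; the paper likewise defers the bookkeeping of translating \eqref{E:a}--\eqref{E:d} back into \eqref{li}--\eqref{cond} to the reader.
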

The rest of this paper is organized as follows. In Section ~\ref{sec:basic} we introduce a few basic facts about  Lie groups of polynomial growth, and the necessary definitions and properties of Lorentz spaces. In Section ~\ref{sec:ckn}, after deriving Sobolev, Hardy and Hardy--Sobolev inequalities in Lorentz spaces, we prove Theorem \ref{th1}. In Section ~\ref{sec:ckn1}, after proving Proposition \ref{pro}, we use it (more precisely we use its Corollary \ref{cor}) to prove Theorem \ref{th2}.

Finally, let us mention that we adopt the notations $A \lesssim B$ to design $A\leq c B$ for some constant $c$ $>$ $0$ and $A \sim B$ for $c_1 B \leq A \leq c_2 B$. More precisely, in our context, $A$ and $B$ will represent (mainly) Lorentz norms of a function $f$ and the constant $c$, which varies from line to line, will depend on Lorentz indices only.

\noindent {\bf Acknowledgements.} The author would like to thank Sami Mustapha for pointing out the reference \cite{tycho}.
\section{Basic facts about Lie groups of  polynomial growth and Lorentz spaces}\label{sec:basic}

{\bf Lie groups of polynomial volume growth.} We recall some basic definitions and facts
about Lie groups of polynomial growth,
for which we refer for instance to
\cite{livre}, \cite{dungey}, 
\cite{alex}, \cite{gui} and \cite{martini}
for a deeper insight.

Let $G$ be a non-compact connected real Lie group and
let $dx$ be a left invariant Haar measure on $G$.
If $A\subset G$ is a Borel set,
we will denote by $\left|A\right|$ its Haar measure.
Let $U$ be a fixed compact neighborhood of the identity
element $e$ of $G$ and $U^j = \left\{x_{1} \cdots x_{j}:x_{1}, \dots, x_{j} \in U\right\}$.
By \cite{gui} we have the following dichotomy: either for some integer $D$
$$\left|U^j \right| \sim j^{D}$$
for all positive integers $j$, 
and we say that $G$ has polynomial growth of order $D$, or
$$e^{\alpha j} \lesssim \left|U^j\right| \lesssim e^{\beta j}$$
for some $\beta$ $\geq$ $\alpha$ $>$ $0$
and we say that $G$ has exponential growth. In the first case the group $G$ is known to be unimodular \cite{gui}.
Now by [\cite{livre}, Proposition III.4.2], we know that
connected distances on $G$ are always equivalent at infinity. 
Moreover, the Haar measure of large balls $B(r)$, with respect to any left-invariant
distance, grows like $D$-power of the radius:
\begin{equation}\label{D}
V(r) := \left|B(r)\right| \sim r^{D},\,\,\,\,\,\,r \geq 1.
\end{equation}
(For the definition of a connected distance, see [\cite{livre}, p. 40 
and Remark III.4.3].)
Denote by $\mathcal{G}$ the Lie algebra of $G$.
Let $\mathbb{X}=\{X_{1}, \dots, X_{k}\}$ be a H\"ormander system of 
left invariant vector fields on $G$, 
which means that $X_{1} \dots, X_{k}$ generate $\mathcal{G}$ as a Lie algebra. The Carnot-Caratheodory distance induced by $\mathbb{X}$ and denoted by $\rho$, is an example
of a such connected distance. (For its precise definition we refer also to \cite{livre}.) With respect to this distance, Haar measure of small balls $B(r)$ is governed by some positive integer $d$:
\begin{equation}\label{d}
V(r) = \left|B(r)\right| \sim r^{d},\,\,\,\,\,\,0 <r < 1.
\end{equation}
\vskip 0.5cm
\noindent {\bf Lorentz spaces.} As references we cite \cite{hunt}, \cite{o'}, or \cite{be}.
For a measurable function $f$,
$$\mu_f(\lambda)=\left|\left\{x\in G:\left|f(x)\right|>\lambda\right\}\right|$$
is the distribution function of $f$.
The decreasing rearrangement of $f$ is given by
$$f^\ast(t)=\inf\left\{\lambda>0:\mu_f(\lambda)\leq t\right\}.$$
The average function of $f^{\ast}$ is defined by
$$f^{\ast\ast}(t)=\frac{1}{t}\int_0^t f^{\ast}(s)\,ds$$
and it is clear that
\begin{equation}\label{1}
f^\ast(t) \leq f^{\ast\ast}(t).
\end{equation}
For the product, we have
\begin{equation}\label{2}
\left( f g\right)^\ast \left( t_1 + t_2\right) \leq f^\ast\left( t_1\right) g^\ast\left( t_2\right),
\end{equation}
and
\begin{equation}\label{3}
\int_0^t\,(f g)^\ast (s)\,ds \leq \int_0^t\,f^\ast (s) g^\ast (s)\,ds.
\end{equation}

\noindent Let $1$ $\leq$ $p,\ q$ $\leq$ $\infty$. The Lorentz space $L^{p,\,q}$ = $L^{p,\,q}(G)$ is defined as the collection of all measurable function $f$ 
such that $\left\|f\right\|_{p,\,q}$ is finite, where
$$\left\|f\right\|_{p,\,q} = \left( \int_0^\infty \left( t^{1/p-1/q}\,f^\ast (t) \right)^q\,dt\right)^{1/q},\,\,\,\,if 1 \leq q < \infty$$
and
$$\left\|f\right\|_{p,\,\infty} = \sup_{t > 0} t^{1/p} f^\ast (t).$$

\noindent We have 
\begin{equation}\label{id}
\left\|f\right\|_{p, p} = \left\|f\right\|_p
\end{equation}
and with respect to the second indice we have: if $q_1$ $\leq$ $q_2$, then
\begin{equation}\label{in}
\left\|f\right\|_{p,q_2} \leq \left\|f\right\|_{p,q_1},
\end{equation}
hence for $q_1$ $<$ $p$ $<$ $q_2$ we have the following inclusions
\begin{equation}\label{inc}
L^{p,\,q_1} \,\subsetneq \,L^p \,\subsetneq \,L^{p,\,q_2}
\end{equation}

\noindent We will make use of the following $L^{p,\,q}$ version of H\"older's inequality
\begin{equation}\label{h}
\left\|fg\right\|_{p, q} \leq \left\|f\right\|_{p_1, q_1} \left\|g\right\|_{p_2, q_2}\,\,\,\,
\end{equation}
where $\frac{1}{p}$ = $\frac{1}{p_1}$ + $\frac{1}{p_2}$ $<$ $1$,\,\,\,$\frac{1}{q}$ = $\frac{1}{q_1}$ + $\frac{1}{q_2}$ and $1$ $\leq$ $q_1, q_2$ $\leq$ $\infty$.

\section{Sobolev, Hardy, Hardy--Sobolev inequalities. Proof of Theorem\ref{th1}.}\label{sec:ckn}

The following proposition describes Sobolev improved within the framework of Lorentz spaces.

\vskip 0.3cm
\begin{proposition}\label{prop:importante}
Let $n \in \left[d,D\right]$. Suppose that $1< p<n$ and that $1\leq q\leq\infty$. Then
there exists a constant $c = c(n, p, q) > 0$ such that
\begin{equation}\label{eq:SL}
\left\|f\right\|_{\frac{np}{n-p},q} \leq c \left\|\left|X f\right|\right\|_{p,q}
\end{equation}
for all $f \in C^\infty _c (G)$.

In particular, if $p\leq q$ then 
\begin{equation}\label{eq:SLp}
\left\|f\right\|_{\frac{np}{n-p},q} \leq C \left\|\left|X f\right|\right\|_p
\end{equation}
for all $f \in C^\infty _c (G)$.

\end{proposition}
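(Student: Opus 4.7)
The plan is to combine the pointwise bound $|f(x)|\lesssim \sum_{i=1}^{k} T(|X_i f|)(x)$ derived in Section~\ref{sec:result}, where $T$ is convolution with the radial kernel $N(x)=\rho(x)/V(\rho(x))$, with O'Neil's convolution inequality in Lorentz spaces. The whole argument reduces to a single weak-type bound on $N$, namely that $N$ belongs to $L^{n',\infty}(G)$, where $n'=n/(n-1)$ is the conjugate exponent of $n$.

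The central step is to estimate the distribution function of $N$. The two-scale volume bounds \eqref{d} and \eqref{D} give
\[
N(x)\sim\rho(x)^{1-d}\ \text{for }\rho(x)\leq 1,\qquad N(x)\sim\rho(x)^{1-D}\ \text{for }\rho(x)\geq 1,
\]
so $N$ is equivalent to a decreasing radial profile. For $\lambda>0$, the super-level set $\{N>\lambda\}$ is therefore contained in a ball $B(R_\lambda)$, and splitting according to whether $R_\lambda\leq 1$ or $R_\lambda\geq 1$ yields
\[
|\{N>\lambda\}|\lesssim\lambda^{-d/(d-1)}\ (\lambda\gg 1),\qquad |\{N>\lambda\}|\lesssim\lambda^{-D/(D-1)}\ (\lambda\ll 1).
\]
Since $d\leq n\leq D$, the exponent $n'$ lies between $D'$ and $d'$, so each of these bounds is dominated by $\lambda^{-n'}$ in its respective range and the two combine into $|\{N>\lambda\}|\lesssim\lambda^{-n'}$ uniformly in $\lambda>0$. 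Equivalently, $\|N\|_{n',\infty}<\infty$.

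With this in hand, O'Neil's convolution inequality on the unimodular group $G$ gives, for every $1<p<n$, $1\leq q\leq\infty$ and $s=np/(n-p)$,
\[
\|N\ast g\|_{s,q}\lesssim\|N\|_{n',\infty}\,\|g\|_{p,q}.
\]
Applying this with $g=|X_i f|$, using $|X_i f|\leq|Xf|$ pointwise and summing over $i=1,\dots,k$, I obtain \eqref{eq:SL}. The refinement \eqref{eq:SLp} then follows from the Lorentz embedding $L^p=L^{p,p}\hookrightarrow L^{p,q}$ valid whenever $p\leq q$, which is \eqref{in}, applied to $|Xf|$.

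The only nontrivial obstacle is the weak-type bound for $N$: one must exploit \emph{both} the local dimension $d$ and the dimension at infinity $D$, via the hypothesis $n\in[d,D]$, to collapse the two different power-law tails of the kernel into a single weak-$L^{n'}$ estimate. Once that is done, the conclusion is a formal consequence of standard convolution and embedding properties of Lorentz spaces.
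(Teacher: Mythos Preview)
Your proof is correct and follows essentially the same route as the paper: the pointwise representation \eqref{eq:in�}, the weak-type estimate $N\in L^{n/(n-1),\infty}$ derived from the two-scale volume bounds \eqref{d}--\eqref{D}, and then O'Neil/Hunt's convolution inequality in Lorentz spaces. The only cosmetic difference is that the paper bounds the decreasing rearrangement $N^\ast(t)\le\min(t^{-(d-1)/d},t^{-(D-1)/D})$ to obtain $N\in L^{d/(d-1),\infty}\cap L^{D/(D-1),\infty}$ and then picks out $L^{n/(n-1),\infty}$ from the interval, whereas you estimate the distribution function directly for a fixed $n$; these are equivalent reformulations of the same computation.
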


\begin{proof}
Recall that we have the pointwise estimate \eqref{eq:iné} where the potential operator $T$ is defined by convolution with the kernel $$N(x) = \frac{\rho (x)}{V(\rho (x))}.$$
But this kernel belongs to the space $L^{\frac{d}{d-1},\infty}\cap L^{\frac{D}{D-1},\infty}$. Actually (see also \cite{loh} and \cite{meda}),
using the growth conditions \eqref{D} and \eqref{d}, it is easy to see that for all $t >0$,
\begin{equation}\label{eq:N}
N^\ast (t)\leq min\left(\frac{1}{t^{\frac{d-1}{d}}},\frac{1}{t^{\frac{D-1}{D}}}\right),
\end{equation}
so$$\sup_{0<t<1} t^{\frac{d-1}{d}}N^\ast (t)+\sup_{t\geq1} t^{\frac{D-1}{D}}N^\ast (t)<\infty.$$
Using the notation of \cite{tycho}, it means that $$N\in L^{\infty,\infty}_{\frac{d}{d-1},\frac{D}{D-1}}.$$
But since $\frac{d}{d-1}>\frac{D}{D-1}$, then we have $$L^{\infty,\infty}_{\frac{d}{d-1},\frac{D}{D-1}} =L^{\frac{d}{d-1},\infty} \cap L^{\frac{D}{D-1},\infty},$$ and so $N \in L^{r,\infty}$ for all $r\in \left[\frac{D}{D-1},\frac{d}{d-1}\right]$.
Now by \cite[Lemma 4.8]{hunt}
 we have $$\left\|X_if \ast N\right\|_{b,q}\leq C \left\|X_if\right\|_{p,q} \left\|N\right\|_{\frac{n}{n-1},\infty},$$ with
$$0<\frac{1}{b}=\frac{1}{p}-\frac{1}{n}<1,$$
thus obtaining the inequality \eqref{eq:SL}.
For \eqref{eq:SLp}, we use \eqref{in} and \eqref{id} to conclude.

\end{proof}
\vskip 0.3cm
\begin{remark}\label{rq}
Using Proposition \ref{prop:importante}, we can immediatly obtain a Gagliardo-Nirenberg inequality under 
Lorentz norms: there exists $c>0$ such that

\begin{equation}\label{eq:gn}
\left\|f\right\|_{r,u}\leq c \left\|\left|X f\right|\right\|_{p,v}^a\left\|f\right\|_{t,w}^{1-a},
\end{equation}
with $\frac{1}{r}=\frac{a}{p_\ast}+\frac{1-a}{t}$,\,\,$\frac{1}{u}=\frac{a}{v}+\frac{1-a}{w}$ and $0\leq a\leq 1$.
Indeed, by a simple application of the H\"older-Lorentz inequality \eqref{h} we have
$$\left\|f\right\|_{r,u}\leq \left\|f\right\|_{p_\ast,v}^a\left\|f\right\|_{t,w}^{1-a},$$
we then apply Proposition \ref{prop:importante} to the first term on the right to conclude.

\end{remark}
\begin{remark}\label{rq badr}
A Haar measure of a Lie group $G$ of polynomial growth is obviously doubling
and it is well known that $G$ supports a Poincar\'e inequality $P_1$. Moreover for $n \in \left[d,D\right]$ we have 
the following global growth condition $\left|B\right|\geq cr^n$ for any ball $B$ of radius $r > 0$.
In \cite{badr}, Badr uses these three properties and starts from the following oscillation inequality
$$f^{\ast \ast}(t)-f^\ast(t)\leq c t^{1/n}\left|Xf\right|^{\ast \ast}(t)$$
to obtain first the  Gagliardo-Nirenberg inequality \eqref{rq gn} in its formulation for Lorentz spaces and
then inequality \eqref{eq:SLp} by taking $a=1$, $u=v=p$ and $r=p^\ast$.
Actually, resuming an approach due to Mario Milman and his collaborators (see e.g. \cite{mm}), Badr consider the more general situation of a complete Riemannian manifold with underlying doubling measure $\mu$, supporting a Poincar\'e inequality $(P_q)$ for some $1\leq q <\infty$
 and such that the following global growth condition $\mu(B)\geq cr^\sigma$
 holds for every ball $B$ of radius $r>0$ and for some $\sigma>q$.

\end{remark}

As a corollary we obtain the Hardy inequalities for the system $\mathbb{X}$ in Lorentz spaces.
\begin{corollary}
Let $1<p<d$ and $1\leq q\leq\infty$.Then
\begin{equation}\label{eq:H}
\left\|\rho^{-1}f\right\|_{p,q} \leq C \left\|\left|X f\right|\right\|_{p,q}
\end{equation}
for all $f \in C^\infty _c (G)$.
\end{corollary}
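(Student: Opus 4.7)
The plan is to deduce \eqref{eq:H} directly from the Sobolev--Lorentz bound of Proposition~\ref{prop:importante}, combined with the H\"older inequality in Lorentz spaces \eqref{h}. The point I expect to need care is that, when $d<D$, the weight $\rho^{-1}$ does not globally belong to any single weak Lebesgue space: near the identity its singularity is governed by the local dimension and $\rho^{-1}\chi_{\{\rho\leq 1\}}\in L^{d,\infty}$, whereas far from the identity the $D$-dimensional growth of large balls only delivers $\rho^{-1}\chi_{\{\rho>1\}}\in L^{D,\infty}$. Both facts will follow from a direct computation of the decreasing rearrangement using \eqref{D} and \eqref{d}.

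Accordingly, my first step will be to split $\rho^{-1}f=\rho^{-1}f\chi_{\{\rho\leq 1\}}+\rho^{-1}f\chi_{\{\rho>1\}}$ and treat the two pieces separately. For the local piece I will apply \eqref{h} with the pair $(p_1,q_1)=(d,\infty)$ and $(p_2,q_2)=\bigl(dp/(d-p),q\bigr)$; these satisfy $1/p_1+1/p_2=1/p<1$ and $1/q_1+1/q_2=1/q$, as required. I then invoke Proposition~\ref{prop:importante} with $n=d$, which is permitted by the hypothesis $1<p<d$, to bound the second factor and obtain $\left\|\rho^{-1}f\chi_{\{\rho\leq 1\}}\right\|_{p,q}\lesssim \left\|\left|Xf\right|\right\|_{p,q}$.

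For the far-away piece I plan to run the identical scheme with the pair $(p_1,q_1)=(D,\infty)$ and $(p_2,q_2)=\bigl(Dp/(D-p),q\bigr)$, admissible because $1<p<d\leq D$, and then apply Proposition~\ref{prop:importante} with $n=D$. Summing the two estimates via the quasi-triangle inequality of the Lorentz (quasi-)norm will yield \eqref{eq:H}.

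The only genuine obstacle, as I see it, is recognising that one must split at $\rho=1$ in order to accommodate the two growth regimes of the Haar measure; once that splitting is in place, the argument reduces to elementary book-keeping with H\"older--Lorentz and two instances of the Sobolev--Lorentz inequality, and no further tool is needed.
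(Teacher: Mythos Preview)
Your proposal is correct and follows essentially the same route as the paper: split the weight at $\rho=1$ so that $\rho^{-1}\chi_{\{\rho\leq 1\}}\in L^{d,\infty}$ and $\rho^{-1}\chi_{\{\rho>1\}}\in L^{D,\infty}$, apply the H\"older--Lorentz inequality \eqref{h} with the pairs $(d,\infty)$ and $(D,\infty)$ respectively, and then invoke Proposition~\ref{prop:importante} with $n=d$ and $n=D$. The paper's own proof is identical up to notation (it writes $\rho_0^{-1}$ and $\rho_\infty^{-1}$ for the two pieces of the weight).
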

\begin{proof}
Let $B=B(e,1)$. We set $\rho_0^{-1}=\rho^{-1}\chi_B$ and $\rho_\infty^{-1}=\rho^{-1}-\rho_0^{-1}$.
By \eqref{D} and \eqref{d} we have $$\rho^{-1}=\rho_0^{-1}+\rho_\infty^{-1} \in L^{d,\infty}+L^{D,\infty}.$$ Then

\begin{align*}
\left\|\rho^{-1}f\right\|_{p,q} &\lesssim \left\|\rho_0^{-1}f\right\|_{p,q}+\left\|\rho_\infty^{-1}f\right\|_{p,q}
\\
&\lesssim\left\|\rho_0^{-1}\right\|_{d,\infty}\left\|f\right\|_{\frac{dp}{d-p},q}+\left\|\rho_\infty^{-1}\right\|_{D,\infty}\left\|f\right\|_{\frac{Dp}{D-p},q}
\\
&\lesssim \left\|\left|X f\right|\right\|_{p,q},
\end{align*}
where we have used \eqref{h} and Proposition \ref{prop:importante}.
\end{proof}

The following corollary is a Hardy--Sobolev type inequality in Lorentz spaces which interpolates \eqref{eq:SL} and \eqref{eq:H}.
\begin{corollary}\label{chs}
Let $n \in \left[d,D\right]$. Suppose that $1<p<d$ and $1\leq q\leq \infty$. Let $0\leq l\leq p$ and define $p^\ast(l):=\frac{p(n-l)}{n-p}$. Then
\begin{equation}\label{eq:HS}
\left\|\rho^{-l/p^\ast(l)}\,f\right\|_{p^\ast(l),q} \lesssim \left\|\left|X f\right|\right\|_{p,q}
\end{equation}
for all $f \in C^\infty _c (G)$.
\end{corollary}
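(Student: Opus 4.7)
The strategy is to reproduce the interpolation argument sketched in the Introduction for $\R^n$, now in the Carnot--Caratheodory setting and for Lorentz norms, using the Hardy inequality \eqref{eq:H} and the Sobolev inequality \eqref{eq:SL} as the two endpoints and the H\"older--Lorentz inequality \eqref{h} as the interpolation tool.

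Concretely, I would set
$$a \;=\; \frac{l}{p^\ast(l)} \;=\; \frac{l(n-p)}{p(n-l)},$$
which lies in $[0,1]$ for $l\in[0,p]$. The boundary values $l=0$ and $l=p$ recover respectively \eqref{eq:SL} and \eqref{eq:H} without further work, so assume $0<l<p$. The key pointwise factorisation is
$$\rho^{-l/p^\ast(l)}\,|f| \;=\; \bigl(\rho^{-1}|f|\bigr)^{a}\,|f|^{1-a}.$$
To this I apply the Lorentz H\"older inequality \eqref{h} with exponents
$$\frac{1}{p^\ast(l)}\;=\;\frac{a}{p}+\frac{1-a}{p^\ast(0)},\qquad \frac{1}{q}\;=\;\frac{a}{q}+\frac{1-a}{q},$$
where $p^\ast(0)=np/(n-p)$ is the Sobolev exponent appearing in \eqref{eq:SL}; a short algebraic check using the definitions of $p^\ast(l)$ and $a$ verifies the first relation. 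Combined with the elementary scaling identity $\|g^{\beta}\|_{r,s}=\|g\|_{\beta r,\beta s}^{\beta}$ (which is immediate from $(g^{\beta})^{\ast}(t)=(g^{\ast}(t))^{\beta}$ for $\beta,g\geq 0$), this gives
$$\bigl\|\rho^{-l/p^\ast(l)}f\bigr\|_{p^\ast(l),q}\;\leq\;\|\rho^{-1}f\|_{p,q}^{\,a}\;\|f\|_{p^\ast(0),q}^{\,1-a}.$$
The Hardy inequality \eqref{eq:H} bounds the first factor by $\||Xf|\|_{p,q}^{\,a}$ and the Sobolev inequality \eqref{eq:SL} bounds the second by $\||Xf|\|_{p,q}^{\,1-a}$; multiplying produces \eqref{eq:HS}.

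I do not foresee a serious obstacle. The only non-trivial task is matching the H\"older exponents, which is the single algebraic identity displayed above, together with verifying the feasibility conditions of \eqref{h}: one needs $1/p^\ast(l)<1$, which holds since $p^\ast(l)\geq p>1$, and the four Lorentz indices $p/a$, $p^\ast(0)/(1-a)$, $q/a$, $q/(1-a)$ all lying in $[1,\infty]$, which is automatic for $a\in(0,1)$ and $q\in[1,\infty]$.
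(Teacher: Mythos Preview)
Your proof is correct and the algebra all checks out: the H\"older exponents match, the scaling identity for Lorentz norms is valid, and the hypotheses $1<p<d\le n$ ensure that both \eqref{eq:H} and \eqref{eq:SL} apply.

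However, the route is genuinely different from the paper's. The paper does \emph{not} interpolate between Hardy and Sobolev; instead it repeats the decomposition strategy already used in the proof of \eqref{eq:H}: it splits the weight as $\rho^{-l/p^\ast(l)}=\rho_0^{-l/p^\ast(l)}+\rho_\infty^{-l/p^\ast(l)}$ (inside and outside the unit ball), observes that these pieces lie in $L^{d\,p^\ast(l)/l,\infty}$ and $L^{D\,p^\ast(l)/l,\infty}$ respectively, applies the H\"older--Lorentz inequality \eqref{h} to the product of weight and $f$, and then invokes the Sobolev inequality \eqref{eq:SL} twice with two intermediate parameters $N,M\in[d,D]$ chosen so that $\frac{d}{d-l}p^\ast(l)=\frac{Np}{N-p}$ and $\frac{D}{D-l}p^\ast(l)=\frac{Mp}{M-p}$. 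Thus the paper's proof of Corollary~\ref{chs} uses only Proposition~\ref{prop:importante} and never calls on the Hardy inequality \eqref{eq:H}. Your argument is cleaner and avoids the algebraic verification that $N,M\in[d,D]$; it is in fact exactly the factorisation technique the paper itself uses later in the proof of Theorem~\ref{th1} (and in the Introduction for $\R^n$). The paper's approach, on the other hand, makes transparent how the two volume-growth regimes $d$ and $D$ enter separately, which is the structural point behind the whole Section~\ref{sec:ckn}.
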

\begin{proof}
If $l=0$ then $p^\ast(0)=p_\ast$ and \eqref {eq:HS} is just \eqref{eq:SL}. If $l=p$ then $p^\ast(p)=p$ and \eqref {eq:HS} is \eqref {eq:H}. Assuming $0<l<p$, the proof follows by noticing that $\rho_0^{-\frac{l}{p^\ast(l)}}\in L^{d \frac{p^\ast(l)}{l},\infty}$ and $\rho_\infty^{-\frac{l}{p^\ast(l)}}\in L^{D \frac{p^\ast(l)}{l},\infty}$. We then apply \eqref{h} to obtain

\begin{align*}
\left\|\rho^{-l/p^\ast(l)}\,f\right\|_{p^\ast(l),q}&\lesssim \left\|\rho_0^{-\frac{l}{p^\ast(l)}}\,f\right\|_{p^\ast(l),q}+\left\|\rho_\infty^{-\frac{l}{p^\ast(l)}}\,f\right\|_{p^\ast(l),q}
\\
&\lesssim \left\|\rho_0^{-\frac{l}{p^\ast(l)}}\right\|_{d\frac{p^\ast(l)}{l},\infty}\left\|f\right\|_{\frac{d}{d-l}p^\ast(l),q}+
\left\|\rho_\infty^{-\frac{l}{p^\ast(l)}}\right\|_{D\frac{p^\ast(l)}{l},\infty}\left\|f\right\|_{\frac{D}{D-l}p^\ast(l),q}
\\
&\lesssim \left\|\left|X f\right|\right\|_{p,q}
\end{align*}
where in the last inequality we have used Proposition \ref{prop:importante}, provided that $\frac{d}{d-l}p^\ast(l)$ and $\frac{D}{D-l}p^\ast(l)$ are Sobolev indices $\frac{Np}{N-p}$ and $\frac{Mp}{M-p}$ respectively  with $N$, $M \in \left[d,D\right]$. But it is easy to see that this is the
case since it suffices to take $N=\frac{pd(n-l)}{d(p-l)+l(n-p)}$, $M=\frac{pD(n-l)}{D(p-l)+l(n-p)}$ and to observe that $N\in \left[d,n\right]$ and $M\in \left[n,D\right]$.

\end{proof}

\begin{proof}[Proof of Theorem \ref{th1}]
Let us consider the case $a=1$, that is CKN inequalities without the interpolation term. By \eqref{eq:balance} and \eqref{delta}, we have $\frac{1}{r}=\frac{1}{p}-\frac{1+\gamma}{n}$ with $-1\leq\gamma\leq0$ so $r$ lies between $p$ and $p^\ast$: $r=tp+(1-t)p^\ast=\frac{p(n-tp)}{n-p}$ and since $\gamma$ = $-tp/r$ then $\rho^\gamma$ = $\rho^{-tp/p^\ast (tp)}$ and we conclude by Corollary \ref{chs}.

\noindent Let now $0$ $<$ $a$ $<$ $1$ 
and let the index $s$ defined by the relation
\begin{equation} \label{r}
\frac{1}{r}=\frac{a}{s}+\frac{1-a}{q}.
\end{equation}
Let also $q_1$, $q_2$ and $q_3$ such that $\frac{1}{q_3}=\frac{a}{q_2}+\frac{1-a}{q_1}$. Provided that
\begin{equation} \label{sig}
\gamma=a\sigma+(1-a)\beta,
\end{equation}
we obtain
\begin{align*}
\left\|\rho^\gamma f\right\|_{r,q_1}&=\left\|\rho^{a\sigma+(1-a)\beta}f^a f^{1-a}\right\|_{r,q_1}
\\
&\leq \left\|\rho^{a\sigma}f^a\right\|_{\frac{s}{a},\frac{q_2}{a}}\left\|\rho^{(1-a)\beta}f^{1-a}\right\|_{\frac{q}{1-a},\frac{q_3}{1-a}}
\\
&=\left\|\rho^\sigma f\right\|^a_{s,q_2}\left\|\rho^\beta f\right\|^{1-a}_{q,q_3}
\end{align*}
By \ref{eq:balance}, \ref{r} and \ref{sig}, we have 
\begin{equation} \label{eq:s}
\frac{1}{s}=\frac{1}{p}-\frac{1+\sigma}{n}.
\end{equation} \label{eq:s}

\noindent Since by \eqref{delta} we have $-1\leq \sigma \leq 0$, then the index $s$ lies between $p$ and $p^\ast$ so we conclude, as in the case $a = 1$, by using Corollary \ref{chs}.

\end{proof}
\section{Weighted $L^{p,\,q}$ estimates. Proof of Theorem \ref{th2}.}\label{sec:ckn1}
The following proposition gives sufficient conditions for the $L^{p,q}$ weighted boundedness of $T$. 
\begin{proposition}\label{pro}
Let $p$, $s$ $\geq$ $1$ and $1$ $\leq$ $q_2$ $\leq$ $q_3$ $\leq$ $\infty$. Let $u$ and $v$ two nonnonnegative locally integrable functions. Assume that
\begin{equation} \label{c1}
\sup_{t>0}\left(\int_t^\infty \tau^{q_3/s -1} \phi^{q_3}(\tau) v^{\ast q_3}(\tau)\,d\tau\right)^{1/q_3}\left(\int_0^t\tau^{-\acute{q_2}/p-1+\acute{q_2}}u^{\ast \acute{q_2}}(\tau)\,d\tau\right)^{1/\acute{q_2}}<\infty,
\end{equation}
and
\begin{equation} \label{c2}
\sup_{t>0}\left(\int_0^t \tau^{q_3/s -1}v^{\ast q_3}(\tau)\,d\tau\right)^{1/q_3}
\left(\int_t^\infty\tau^{-\acute{q_2}/p-1+\acute{q_2}}u^{\ast \acute{q_2}}(\tau)\phi^{\acute{q_2}}(\tau) \,d\tau\right)^{1/\acute{q_2}}<\infty,
\end{equation}
where $\frac{1}{q_2}$ + $\frac{1}{\acute{q_2}}$ = $1$ and  $\phi (t) = \min (t^{1/d -1},\,t^{1/D -1})$. Then
\[
\left\|vT(uf)\right\|_{s,q_{3}} \lesssim \left\|f\right\|_{p,q_{2}}.
\]
\end{proposition}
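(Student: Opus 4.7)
My approach is to reduce the operator inequality on $G$ to two classical two-weight Hardy-type inequalities on $(0,\infty)$, whose sharp characterizations (the Muckenhoupt--Bradley theorem for the Hardy operator $H_0\psi(t)=\int_0^t\psi(s)\,ds$ and the dual Sawyer characterization for $H_0^*\psi(t)=\int_t^\infty\psi(s)\,ds$) turn out to match exactly the hypotheses \eqref{c1} and \eqref{c2}.

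The first step is to apply O'Neil's rearrangement inequality for convolution,
\[
(N*g)^{**}(t)\le t\,N^{**}(t)\,g^{**}(t)+\int_t^\infty N^*(\tau)g^*(\tau)\,d\tau,
\]
to $g=uf$, combined with the pointwise bound $N^*(\tau),N^{**}(\tau)\lesssim\phi(\tau)$ (a direct consequence of \eqref{eq:N}) and the rearrangement inequalities \eqref{2} and \eqref{3}. After splitting $[t/2,t]$ from $[t,\infty)$ in the tail integral and using $\phi(2\tau)\lesssim\phi(\tau)$, this yields the crucial pointwise estimate
\[
(T(uf))^{**}(t)\lesssim \phi(t)\int_0^t u^*(s)f^*(s)\,ds+\int_t^\infty\phi(s)u^*(s)f^*(s)\,ds.
\]

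Next, I would use \eqref{2} in the form $(vh)^*(t)\le v^*(t/2)h^{**}(t/2)$ followed by an elementary change of variable to deduce
\[
\|vT(uf)\|_{s,q_3}^{q_3}\lesssim\int_0^\infty v^*(t)^{q_3}(T(uf))^{**}(t)^{q_3}t^{q_3/s-1}\,dt \lesssim I_1+I_2,
\]
where $I_1$ is the contribution of $H_0(u^*f^*)$ weighted by $v^*\phi\,t^{1/s}$ and $I_2$ is the contribution of $H_0^*(\phi u^*f^*)$ weighted by $v^*\,t^{1/s}$. To bound $I_1$, I would view $h:=u^*f^*$ as the variable, write $(f^*)^{q_2}=h^{q_2}u^{*-q_2}$ on the right-hand side of $\|f\|_{p,q_2}^{q_2}=\int f^*(t)^{q_2}t^{q_2/p-1}\,dt$, and apply the Muckenhoupt--Bradley characterization; a short computation using $q_2(1-q_2')=-q_2'$ and $(q_2/p-1)(1-q_2')=-q_2'/p-1+q_2'$ shows that the resulting sup condition is precisely \eqref{c1}. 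The analogous treatment of $I_2$ with $\phi u^*f^*$ as the variable and the dual Sawyer characterization delivers \eqref{c2}.

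The main obstacle is the exponent-bookkeeping in the last step: one has to verify that the weights appearing in the Muckenhoupt--Sawyer condition after substitution correspond exactly to the factor $\tau^{-q_2'/p-1+q_2'}$ prescribed by \eqref{c1}--\eqref{c2}. A secondary technical point is the handling of the boundary cases $q_2=1$ or $q_3=\infty$, where the standard Sawyer characterization has to be replaced by its endpoint variants (for instance a Chebyshev-type estimate when $q_3=\infty$, and the interpretation $q_2'=\infty$ when $q_2=1$).
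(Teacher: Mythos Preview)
Your proposal is correct and follows essentially the same route as the paper: apply O'Neil's convolution inequality together with the bound $N^{*},N^{**}\lesssim\phi$ from \eqref{eq:N} to obtain the two-term pointwise estimate for $(T(uf))^{**}$, then reduce each term to a two-weight Hardy inequality on $(0,\infty)$ whose Muckenhoupt--Bradley criterion (Bradley's Theorems~1 and~2) is exactly \eqref{c1} and \eqref{c2}. The only cosmetic difference is that the paper passes from $(uf)^{**}$ to $u^{*}f^{*}$ via \eqref{3} and a Fubini argument rather than via \eqref{2} and the interval-splitting you describe, and it invokes Bradley directly rather than phrasing the second condition as a ``dual Sawyer'' characterization.
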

\begin{proof}
If we make an obvious variable changement and use \eqref{2}, then we obtain
\begin{align*}
\left\|vT(uf)\right\|_{s,q_{3}}^{q_{3}}&=\int_{0}^\infty \left[t^{1/s} \left(vT\left(uf\right)\right)^\ast \left(t\right)\right]^{q_{3}}\,\frac{dt}{t}
\\
&=2^{q_{3}/s}\int_{0}^\infty \left[t^{1/s} \left(vT\left(uf\right)\right)^\ast \left(2t\right)\right]^{q_{3}}\,\frac{dt}{t}
\\
&\leq 2^{q_{3}/s}\int_{0}^\infty \left[t^{1/s} v^\ast (t)\left(T\left(uf\right)\right)^\ast \left(t\right)\right]^{q_{3}}\,\frac{dt}{t}.
\end{align*}
We write $E(t)$ for the expression into the brackets. By \eqref{1} and a lemma due to O'Neil \cite[Lemma 1.5]{o'} (see also \cite{Y}), we obtain
\begin{align*}
E(t)&\leq t^{1/s} v^\ast (t)\left(T\left(uf\right)\right)^{\ast \ast}\left(t\right)
\\
&\leq t^{1+1/s} v^\ast \left(t\right)\left(uf\right)^{\ast \ast}\left(t\right)N^{\ast \ast}\left(t\right)+
t^{1/s} v^\ast (t) \int_{t}^\infty (uf)^{\ast \ast}(\xi)N^{\ast}(\xi ) \,d\xi.
\end{align*}
Using \eqref{3} we have
\begin{multline*}
E(t)\leq t^{1/s} v^\ast (t)N^{\ast \ast}(t)\int_{0}^{t}
u^\ast\left(\tau\right)f^\ast\left(\tau\right)\,d\tau\\
{}+t^{1/s} v^\ast (t)\int_t^\infty\left(\int_0^\xi u^\ast(\tau)f^\ast(\tau)\,d\tau\right)N^\ast(\xi)\,\frac{d\xi}{\xi}.
\end{multline*}
Changing the order of integration, the second term in this last sum equals
\begin{multline*}
=t^{1/s} v^\ast (t)\int_{0}^{t}
u^\ast(\tau)f^\ast(\tau)\,d\tau \left(\int_t^\infty N^\ast(\xi)\,\frac{d\xi}{\xi}\right)\\
{}+t^{1/s} v^\ast (t)\int_t^\infty u^\ast(\tau)f^\ast(\tau)\left(\int_\tau^\infty N^\ast(\xi)\,\frac{d\xi}{\xi}\right)\,d\tau.
\end{multline*}
Let us write $\psi (t) = \int_t^\infty N^\ast(\xi)\,\frac{d\,\xi}{\xi}$ so that
\begin{multline*}
E(t)\leq t^{1/s} v^\ast (t) (N^{\ast \ast}(t) + \psi (t)) \int_{0}^{t}
u^\ast\left(\tau\right)f^\ast\left(\tau\right)\,d\tau + t^{1/s} v^\ast (t) \int_t^\infty u^\ast(\tau)f^\ast(\tau) \psi (\tau)\,d\tau.
\end{multline*}
By \eqref{eq:N} we have
\begin{equation} \label{eq:!}
N^{\ast \ast}(t) \lesssim \phi (t),\,\,\,\,\,\,\psi (t) \lesssim \phi (t).
\end{equation}
Using \eqref{eq:!}, we obtain
\[
E(t)\lesssim t^{1/s} \phi (t) v^\ast(t)\int_0^tu^\ast(\tau)f^\ast(\tau)\,d\tau+t^{1/s}v^\ast(t)\int_t^\infty u^\ast(\tau)f^\ast(\tau) \phi (\tau)\,d\tau.
\]

Consequently,

\begin{multline}\label{eq:m}
\left\|vT(uf)\right\|_{s,q_3}\lesssim 
\left( \int_0^\infty\left[t^{1/s} \phi (t) v^\ast(t)\int_0^tu^\ast(\tau)f^\ast(\tau)\,d\tau\right]^{q_3}\,\frac{dt}{t}\right)^{1/q_3}\\
+\left( \int_0^\infty\left[t^{1/s}v^\ast (t)\int_t^\infty u^\ast(\tau)f^\ast(\tau) \,\phi (\tau)\,d\tau \right]^{q_3}\,\frac{dt}{t} \right)^{1/q_3}.
\end{multline}

Now we need the following theorems in \cite{bradly}: 
\begin{theorem}(\cite[Theorem 1]{bradly})
Let $1$ $\leq$ $p$ $\leq$ $q$ $\leq$ $\infty$. Suppose $u$ and $v$ are non-negative. Then
\[\left(\int_0^\infty\left[u(x)\int_0^xf(t)\,dt\right]^q\,dx\right)^{1/q}\leq c\left(\int_0^\infty\left[f(x)v(x)\right]^p\,dx\right)^{1/p},
\]

holds for non-negative $f$ if and only if
\[
\sup_{r>0}\left(\int_r^\infty u(x)^q\,dx\right)^{1/q}\left(\int_0^r v(x)^{-p{'}}\,dx\right)^{1/p{'}}<\infty
\]

\end{theorem}

\begin{theorem}(\cite[Theorem 2]{bradly})
Suppose that $1$ $\leq$ $p$ $\leq$ $q$ $\leq$ $\infty$ and that $u$ and $v$ are non-negative. Then
\[\left(\int_0^\infty\left[u(x)\int_x^\infty f(t)\,dt\right]^q\,dx\right)^{1/q}\leq c\left(\int_0^\infty\left[f(x)v(x)\right]^p\,dx\right)^{1/p},
\]
holds for non-negative $f$ if and only if
\[
\sup_{r>0}\left(\int_0^r u(x)^q\,dx\right)^{1/q}\left(\int_r^\infty v(x)^{-p{'}}\,dx\right)^{1/p{'}}<\infty.
\]
\end{theorem}

Then, using \cite[Theorem 1]{bradly} for the first term of the right-hand side of \eqref{eq:m}, we obtain:

\begin{align*}
\left( \int_0^\infty\left[t^{1/s} \phi (t) v^\ast(t)\int_0^tu^\ast(\tau)f^\ast(\tau)\,d\tau\right]^{q_3}\,\frac{dt}{t}\right)^{1/q_3}&\leq C\left( \int_0^\infty\left[t^{1/p-1/q_2}f^\ast(t)\right]^{q_2}\,dt \right)^{1/q_2}
\\
&=C\left\|f\right\|_{p,q_2}
\end{align*}

holds if and only if we have \eqref{c1}.
Similarly, using \cite[Theorem 2]{bradly} for the second term of the right-hand side of \eqref{eq:m}, we get 

\begin{align*}
\left( \int_0^\infty\left[t^{1/s}v^\ast (t)\int_t^\infty u^\ast(\tau)f^\ast(\tau) \,\phi (\tau)\,d\tau \right]^{q_3}\,\frac{dt}{t} \right)^{1/q_3}&\leq C\left( \int_0^\infty\left[t^{1/p-1/q_2}f^\ast(t)\right]^{q_2}\,dt \right)^{1/q_2}
\\
&=C\left\|f\right\|_{p,q_2}
\end{align*}
holds if and only if we have
 \eqref{c2}.
\end{proof}

Now if we apply the proposition to our particular weights $u = \rho^{-{\alpha}}$ and $v = \rho^{-{\sigma}}$ with ${\alpha}$,\,$\sigma$ $> 0$, then we obtain
\begin{corollary}\label{cor}
Let $1$ $<$ $p$ $\leq$ $s$ $<$ $\infty$ and $\alpha$, $\sigma$ $>$ $0$. Assume that
\begin{align}
1- \frac{d}{p} &< \alpha < \frac{d}{p'}, \label{E:a}\\
\sigma &< \frac{d}{s}, \label{E:b}\\
\alpha + \sigma - 1 &= \frac{d}{s} - \frac{d}{p} \label{E:c}\\
\frac{1}{s} - \frac{1}{p} &\leq \frac{\alpha + \sigma -1}{D}. \label{E:d}
\end{align}
Then,
\begin{equation}\label{eq:int}
\left\| \rho^{-{\sigma}} T(f)\right\|_{s,q_3} \lesssim \left\| \rho^{{\alpha}} f\right\|_{p,q_2},
\end{equation}
where $1 \leq q_2 \leq q_3 \leq \infty$.
\end{corollary}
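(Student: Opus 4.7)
The plan is to deduce Corollary \ref{cor} directly from Proposition \ref{pro}. After substituting $g = \rho^{\alpha} f$, the corollary's target inequality
\[
\|\rho^{-\sigma}T(f)\|_{s,q_{3}}\lesssim \|\rho^{\alpha}f\|_{p,q_{2}}
\]
becomes exactly $\|vT(ug)\|_{s,q_{3}}\lesssim \|g\|_{p,q_{2}}$ for the weights $u=\rho^{-\alpha}$ and $v=\rho^{-\sigma}$. So everything reduces to verifying the two Muckenhoupt-type conditions \eqref{c1} and \eqref{c2} with these specific $u$ and $v$.

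First I would compute the decreasing rearrangements. Since $\mu_{u}(\lambda)=|\{\rho^{-\alpha}>\lambda\}|=V(\lambda^{-1/\alpha})$, the growth estimates \eqref{d} and \eqref{D} yield
\[
u^{\ast}(\tau)\sim \tau^{-\alpha/d}\quad (0<\tau\leq 1),\qquad u^{\ast}(\tau)\sim \tau^{-\alpha/D}\quad (\tau>1),
\]
and analogously for $v^{\ast}$ with $\alpha$ replaced by $\sigma$. The weight $\phi$ also has two-regime behavior ($\phi(\tau)=\tau^{1/d-1}$ on $(0,1]$, $\phi(\tau)=\tau^{1/D-1}$ on $(1,\infty)$), as already noted in \eqref{eq:!}.

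Next I would plug these expressions into \eqref{c1} and \eqref{c2}. Each of the four integrals becomes a sum of pure power integrals over $(0,t\wedge 1)$, $(t\wedge 1,t\vee 1)$ and $(t\vee 1,\infty)$, and the analysis splits into a small-$t$ (local, governed by $d$) and a large-$t$ (global, governed by $D$) regime. The integrability at $0$ of the $u^{\ast q_{2}'}$ factor in \eqref{c1} gives the upper bound $\alpha<d/p'$ in \eqref{E:a}; the integrability at $0$ of $u^{\ast q_{2}'}\phi^{q_{2}'}$ in \eqref{c2} gives the lower bound $\alpha>1-d/p$; the integrability at $0$ of the $v^{\ast q_{3}}$ factor in \eqref{c2} yields $\sigma<d/s$, i.e.\ \eqref{E:b}; and integrability at $\infty$ of all four tail integrals is controlled by \eqref{E:d}.

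Finally I would verify that $\sup_{t>0}A(t)^{1/q_{3}}B(t)^{1/q_{2}'}$ is finite in each condition. A direct computation of the resulting powers of $t$ shows that the two factors have reciprocal scaling: in the local regime the product exponent is a linear combination that vanishes precisely by \eqref{E:c} (after substituting $1/s-1/p=(\alpha+\sigma-1)/d$, all $t$-powers cancel), and in the global regime the same cancellation happens using the boundary case of \eqref{E:d}. The main obstacle will be the bookkeeping: since both $u^{\ast}$, $v^{\ast}$ and $\phi$ switch regimes at $\tau=1$, each of the four integrals splits into several subcases depending on whether $t\lessgtr 1$ and on the position of $\tau$ relative to $t$ and $1$, and one must check case by case that no exponent of $t$ in the supremum is positive. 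The scaling identity \eqref{E:c}, together with the marginal condition \eqref{E:d}, is precisely what makes this compensation work.
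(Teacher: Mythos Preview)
Your approach is the same as the paper's: apply Proposition~\ref{pro} with $u=\rho^{-\alpha}$, $v=\rho^{-\sigma}$ and check that \eqref{c1} and \eqref{c2} hold by computing $u^{\ast},v^{\ast}$ in the two regimes and matching exponents. The paper's proof is in fact much terser than yours; it merely records two derived inequalities, $\frac{1}{s}-\frac{\sigma-1}{D}-1<0$ (from the upper bound in \eqref{E:a} combined with \eqref{E:d}) and $\alpha>1-\frac{D}{p}$ (from the lower bound in \eqref{E:a}, since $d<D$), and then asserts that the six resulting conditions give \eqref{c1} and \eqref{c2}.

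One small correction to your bookkeeping: the lower bound $\alpha>1-\frac{d}{p}$ does not come from ``integrability at $0$ of $u^{\ast q_2'}\phi^{q_2'}$ in \eqref{c2}''---that integral runs from $t$ to $\infty$, so there is no issue at $0$. Rather, $\alpha>1-\frac{d}{p}$ fixes the sign of the exponent in the $\int_t^1$ piece of that integral (so that it blows up like the correct negative power of $t$ and is then cancelled by the $v^{\ast}$ factor via \eqref{E:c}), while the integrability at $\infty$ of the two outer integrals uses precisely the two derived conditions above, not \eqref{E:d} alone. This is exactly the kind of case-splitting you anticipated, and the overall plan is correct.
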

\begin{proof}
The second inequality in \eqref{E:a} and \eqref{E:d} leade 
\begin{equation}\label{E:e}
\frac{1}{s} - \frac{\sigma -1}{D}-1 < 0,
\end{equation}
and the first inequality in \eqref{E:a} leads
\begin{equation}\label{E:f}
\alpha > 1- \frac{D}{p}.
\end{equation}
With these six conditions, we obtain \eqref{c1} and \eqref{c2}.
\end{proof}
\vskip 1cm

\begin{proof}[Proof of Theorem \ref{th2}]
Now the proof is a simple application of Corollary \ref{cor}.
Let $0$ $<$ $a$ $\leq$ $1$ and take $1\leq s,q \leq +\infty$ such that

$$\frac{1}{r}=\frac{a}{s}+\frac{1-a}{q}.$$
As in the proof of Theorem \ref{th1}, using \eqref{h} we obtain
$$\left\| \rho^{-\gamma} f\right\|_{r, q_1} \leq 
\left\|\rho^{-\sigma} f\right\|^a_{s, q_2} \left\|\rho^{-\beta} f\right\|_{q,q_3}^{1-a}$$
provided $\gamma=a\sigma+(1-a)\beta$ and $\frac{1}{q_1}=\frac{a}{q_2}+\frac{1-a}{q_3}$.
\noindent But by \eqref{eq:iné} and Corollary \ref{cor} we obtain
\begin{align}
\left\| \rho^{-\sigma} f\right\|_{s, q_2}&\lesssim \left\|\rho^{-\sigma} T(\left| Xf \right|)\right\|_{s, q_2} \\
&\lesssim \left\|\rho^\alpha \left| Xf \right|\right\|_{p, q_2}
\end{align}
under the conditions \eqref{E:a}, \eqref{E:b}, \eqref{E:c} and \eqref{E:d}, thus obtaining
$$\left\| \rho^{-\gamma} f\right\|_{r, q_1} \lesssim 
\left\|\rho^\alpha \left| Xf \right|\right\|^a_{p, q_2} \left\|\rho^{-\beta} f\right\|_{q,q_3}^{1-a}.$$
Now we proceed exactly as D'Ancona and Luca' in \cite{luca} to
rewrite this set of conditions in a compact form, eliminating the parameters $\sigma$ and $s$, so we omit the details.
\end{proof}





\begin{thebibliography}{99}

\bibitem{alex}
G. K. ~Alexopoulos, \emph{Sub-Laplacians with drift on Lie groups of polynomial volume
growth}, Memoirs of the AMS\ {\bf 155}, 739 (2002).

\bibitem{badr}
N. ~Badr, \emph{Gagliardo-Nirenberg inequalities on manifolds}, J.\ Math.\ Anal.\ Appl.\ {\bf 349} (2009),
493-502.
\bibitem{be}
C. Bennet, R. Sharpley, \emph{Interpolation of operators}, Academic Press, 1988

\bibitem{bradly}
J. S. ~Bradley, \emph{Hardy inequalities with mixed norms}, Canad.\ Math.\ Bull.\ {\bf 21} (4) (1978), 405-408.

\bibitem{cow}
P. Ciatti, M. Cowling, and F. Ricci, \emph{Hardy and uncertainty inequalities on stratified Lie groups}, Advances in Mathematics {\bf 277} (2015),365--387.

\bibitem{meda}

M. ~Cowling, S. Meda and R. Pasquale, \emph{Riesz potentials and amalgams}, Ann.\ Inst.\ Fourier (Grenoble)\ {\bf 51} (2001), 5-26.

\bibitem{ckn}
L. A. Caffarelli, R. Kohn and L. Nirenberg, \emph{First order interpolation inequalities with weights}, Compos.\ Math.\ {\bf53} (3) (1984) 259-275.

\bibitem{luca}
P. D'Ancona and R. Luca', \emph{Stein--Weiss and Caffarelli--Kohn--Nirenberg inequalities with angular integrability}, J.\ Math.\ Anal.\ Appl.\ {\bf 388} (2012), 1061-1079.



\bibitem{dungey}
N. Dungey, A. F. M ter Elst, and D. W. Robinson,
\emph{Analysis on Lie groups with polynomial
growth}
, Progress in Math.
{\bf 214}
, Birkhauser, (2003) 312pp.

\bibitem{gri}
G. Grillo,\emph{ Hardy and Rellich-type inequalities for metrics defined by vector fields}, Potential Analysis, {\bf 18} (3) (2003), 187-217.

\bibitem{gui}
Y. Guivarc'h, \emph{Croissance polynomiale et p\'eriode des fonctions harmoniques},
Bull.\ Soc.\ Math.\ France, {\bf 101}, (1973), 333-379.

\bibitem{hunt}
R. A. Hunt, \emph{On L(p;q) spaces}, Enseign.\ Math.\ {\bf 12} (1966), 249-276.


\bibitem{loh}
N. Lohou\'e, \emph{Une variante de l'in\'egalit\'e de Hardy}, Manuscripta Math.\ {\bf 123} (2007), 73-78.
\bibitem{mm}
J. Martin, M. Milman,  \emph{Sharp Gagliardo--Nirenberg inequalities via symmetrization}, Math.\ Res.\ Lett.\ {\bf 14} (1) (2007), 49-62.
\bibitem{martini}
A. Martini, \emph{Algebras of differential operators on Lie groups and spectral multipliers}, Tesi di perfezionamento (PhD
thesis), Scuola Normale Superiore, Pisa, 2010, arXiv:1007.1119.

\bibitem{o'}
R. O'Neil, \emph{Convolution operators and L(p,\, q) spaces}, Duke Math. J.\ {\bf 30} (1963), 129-142.
\bibitem{ru2}
T. Ozawa, M. Ruzhansky, D. Suragan, \emph{$L^p$-Caffarelli-Kohn-Nirenberg type inequalities on homogeneous groups}, arxiv: math FA/1605.02520v2.
\bibitem{p}
C. P\'erez, R. L. Wheeden, \emph{Uncertainty principle estimates for vector fields}, J.\ Functional Analysis, {\bf 181} (1) (2001), 146-188.
\bibitem{ru1}
M. Ruzhansky, D. Suragan, \emph{On horizontal Hardy, Rellich, Caffarelli-Kohn-Nirenberg and p-sub-Laplacian inequalities on stratified groups}, J.\ Differential Equations, {\bf 262} (2017), 1799-1821.
\bibitem{ru3}
M. Ruzhansky, D. Suragan and N. Yessirkegenov, \emph{Extended Caffarelli--Kohn--Nirenberg
inequalities and superweights for Lp-weighted Hardy inequalities}, C.\ R.\ Acad. Sci. Paris, http://dx.doi.org/10.1016/j.crma.2017.04.011.

\bibitem{sa}
E. Sawyer, R. L. Wheeden, \emph{Weighted inequalities for fractional integrals on Euclidean and
homogeneous spaces}, Amer.\ J.\ Math.\ {\bf 114} (1992), 813-874.


\bibitem{sc}
L. Saloff-Coste, \emph{Analyse sur les groupes de Lie $\grave{a}$ croissance polynomiale}, Ark.\ Mat.\ {\bf 28} (1990),
315-331.

\bibitem{tycho}
E. Tychopoulos, \emph{Lorentz spaces and Lie groups}, J.\ Approx.\ Theory {\bf 84} (3) (1996), 274-289.


\bibitem{livre}
N. Th. Varopoulos, L. Saloff-Coste, T. Coulhon, \emph{Analysis and geometry on groups}, Cambridge University
Press {\bf 47} (1992).

\bibitem{Y}
L. Y. H. Yap, \emph{Some remarks on convolution operators and  L(p,\, q) spaces}, Duke Math.\ J.\ {\bf 36}, (1969), 647-658.

\bibitem{chi}
S. Zhang, Y. Han, J. Dou, \emph{A class of Caffarelli-Kohn-Nirenberg type inequalities on the H-type group}, Sem.\ Mat.\ Univ.\ Padova {\bf 132} (2014), 249-266.

\end{thebibliography}
\end{document}